\numberwithin{equation}{section}
\numberwithin{figure}{section}
\theoremstyle{plain}
\newtheorem{thm}{\protect\theoremname}[section]
  \theoremstyle{plain}
  \theoremstyle{plain}
  \theoremstyle{remark}
  \newtheorem{rem}[thm]{\protect\remarkname}
  \theoremstyle{plain}
  \newtheorem{cor}[thm]{\protect\corollaryname}
\newtheorem{theorem}{Theorem}[section]
\newtheorem{example}[theorem]{Example}
\providecommand{\lemmaname}{Lemma}
  \providecommand{\propositionname}{Proposition}
  \providecommand{\remarkname}{Remark}
\providecommand{\theoremname}{Theorem}
\providecommand{\lemmaname}{Lemma}
  \providecommand{\propositionname}{Proposition}
  \providecommand{\remarkname}{Remark}
\providecommand{\theoremname}{Theorem}
\providecommand{\corollaryname}{Corollary}
  \providecommand{\lemmaname}{Lemma}
  \providecommand{\propositionname}{Proposition}
  \providecommand{\remarkname}{Remark}
\providecommand{\theoremname}{Theorem}
  \providecommand{\corollaryname}{Corollary}
  \providecommand{\lemmaname}{Lemma}
  \providecommand{\propositionname}{Proposition}
  \providecommand{\remarkname}{Remark}
\providecommand{\theoremname}{Theorem}
\begin{document}

\title[A remark on spaces of affine continuous functions on a simplex] {A remark on spaces of affine continuous functions on a simplex}

\author{E. Casini}

\address{Dipartimento di Scienza e Alta Tecnologia, Università dell'Insubria,
via Valleggio 11, 22100 Como, Italy }

\email{emanuele.casini@uninsunbria.it}

\author{E. Miglierina}

\address{Dipartimento di Discipline Matematiche, Finanza Matematica ed Econometria,
Università Cattolica del Sacro Cuore, Via Necchi 9, 20123 Milano,
Italy }

\email{enrico.miglierina@unicatt.it}

\author{\L. Piasecki}

\address{Instytut Matematyki,
Uniwersytet Marii Curie-Sk{\l}odowskiej, Pl. Marii Curie-Sk{\l}odowskiej 1, 20-031 Lublin, Poland}

\email{piasecki@hektor.umcs.lublin.pl}
\begin{abstract}
We present an example of an infinite dimensional separable space of
affine continuous functions on a Choquet simplex that does not
contain a subspace linearly isometric to $c$. This example disproves a result stated in \cite{Zippin1969}.
\end{abstract}

\subjclass[2010]{Primary 46B04; Secondary 46B45, 46B25}

\keywords{Affine functions, Lindenstrauss spaces, Space of convergent sequences, Polyhedral spaces}

\maketitle

\section{Introduction and Preliminaries}
In the {\it Concluding remarks} in \cite{Zippin1969}, the author claims that a
separable predual of an abstract $L_1$
space contains a (complemented) copy of $c$
(the Banach space of real convergent sequences) if its unit ball has an extreme point.
Only a sketch of the proof of this property was indicated.
In particular there is no proof that the extreme point and the sequence $\{y_n\}$ ($1$-equivalent to the standard
basis of $c_0$ built in the main theorem of \cite{Zippin1969}), span a subspace isometric to $c$.
The aim of this paper is to present a simple example that disproves Zippin's claim.
Moreover, in the last section of our paper we point out that our example also shows that some known results, establishing geometrical properties of polyhedral Banach spaces, are incorrect.

Let $B_X$ ($S_X$) denote the closed unit ball (sphere) in a real Banach space $X$ and
$X^*$ denotes the dual of $X$. If $K$ is a compact, convex subset of
a linear topological space, then by $\textrm{Ext }K$ we denote the
set of all extreme points of $K$. A convex subset $F$ of $B_X$ is
called a \textit{face} of $B_X$ if for every $x,y\in B_X$ and
$\lambda \in (0,1)$ such that $(1-\lambda)x+ \lambda y \in F$ we
have $x,y\in F$. A face $F$ of $B_X$ is named a \textit{proper face}
if $F\neq B_X$. Here $c$ denotes the Banach space of all real
convergent sequences and $A(K)$ stands for a \textit{simplex space},
that is, the space of all affine continuous functions on a Choquet
simplex $K$ endowed with the supremum norm. It is well known that
$c^*=\ell_1$ and the duality is given by:
$$
f(x)=f(1)\lim x(i)+\sum_{i=1}^{+\infty}f(i+1)x(i)
$$
where $f=(f(1),f(2),\dots)\in \ell_1$ and $x=(x(1),x(2),\dots)\in c$.
A Banach
space $X$ is called an $L_1$-\textit{predual space} or a
\textit{Lindenstrauss space} if its dual is isometric to $L_1(\mu)$
for some measure $\mu$. It is well known that this class includes
all the simplex spaces. Moreover a Lindenstrauss space $X$ is isometric to a simplex space
if and only if $B_X$ has at least one extreme point (see \cite{Semadeni1964}). Finally, we recall that a Banach space $X$ is polyhedral if the unit balls of all its finite-dimensional subspaces are polytopes (see \cite{Klee}).

\section{A simplex space not containing $c$}

We begin by providing a necessary condition for the presence
of a copy of $c$ in a separable Banach space $X$.

\begin{thm}\label{c in a separable Banach space}
Let $X$ be a separable Banach space. If $X$ contains a subspace
linearly isometric to $c$, then there exist $x\in X$ and a sequence
$(e_n^*)\subset \textrm{Ext }B_{X^*}$ such that $(e_n^*)$ is
$w^*$-convergent to $e^*$,
$e_n^*(x)=e^*(x)=\left\|e^*\right\|=\left\|x\right\|=1$ and
$\left\|e_n^*\pm e^*\right\|=2$ for every $n\in \mathbb{N}$.
\end{thm}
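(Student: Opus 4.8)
The plan is to reduce everything to the explicit structure of $c$ and then transport the relevant functionals up to $X$. Denote by $Y\subset X$ a subspace isometric to $c$, and identify $Y$ with $c$, so that $Y^*=\ell_1$ with the duality recalled in the introduction. Under this duality the extreme points of $B_{Y^*}$ are exactly $\pm e_1,\pm e_2,\dots$ (the standard unit vectors of $\ell_1$); concretely, the functional $L$ given by $L(z)=\lim_i z(i)$ corresponds to $e_1$, and the coordinate functionals $y_n^*$ given by $y_n^*(z)=z(n)$ correspond to $e_{n+1}$, so all of them lie in $\textrm{Ext}\,B_{Y^*}$. Taking $\bar x=(1,1,1,\dots)\in c$, one checks immediately that $\left\|\bar x\right\|=1$ and $y_n^*(\bar x)=L(\bar x)=1$, that $y_n^*\to L$ in the $w^*$-topology of $Y^*$ (since $z(n)\to\lim_i z(i)$ for each $z\in c$), and that $\left\|y_n^*\pm L\right\|_{\ell_1}=2$ because the supports $\{n+1\}$ and $\{1\}$ are disjoint. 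Thus the entire configuration required by the theorem already lives inside $Y$; the task is to lift the $y_n^*$ to \emph{extreme} points of $B_{X^*}$ while preserving these properties.

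The main tool is a lifting principle for extreme functionals. For each $n$, I would consider the set $E_n=\{x^*\in B_{X^*}:x^*|_Y=y_n^*\}$ of norm-preserving Hahn--Banach extensions of $y_n^*$; it is nonempty, convex, and $w^*$-compact, so by Krein--Milman it has an extreme point $e_n^*$. I claim $e_n^*\in\textrm{Ext}\,B_{X^*}$: if $e_n^*=\frac{1}{2}(u^*+v^*)$ with $u^*,v^*\in B_{X^*}$, then restricting to $Y$ writes the extreme point $y_n^*$ as the midpoint of $u^*|_Y,v^*|_Y\in B_{Y^*}$, forcing $u^*|_Y=v^*|_Y=y_n^*$; hence $u^*,v^*\in E_n$, and extremality of $e_n^*$ in $E_n$ gives $u^*=v^*=e_n^*$. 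This produces a sequence $(e_n^*)\subset\textrm{Ext}\,B_{X^*}$ with $e_n^*|_Y=y_n^*$ for every $n$.

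Next I would invoke separability of $X$: the ball $B_{X^*}$ is then $w^*$-metrizable and $w^*$-sequentially compact, so after passing to a subsequence we may assume $e_n^*\to e^*$ in the $w^*$-topology for some $e^*\in B_{X^*}$. Restricting this convergence to $Y$ yields $e^*|_Y=L$, so in particular $e^*(\bar x)=L(\bar x)=1$; since $e^*\in B_{X^*}$ gives $\left\|e^*\right\|\le1$ while $e^*(\bar x)=1$ gives $\left\|e^*\right\|\ge1$, we get $\left\|e^*\right\|=1$. The two norm-of-sum conditions then come for free: for each $n$,
$$2=\left\|y_n^*\pm L\right\|=\left\|(e_n^*\pm e^*)|_Y\right\|\le\left\|e_n^*\pm e^*\right\|\le\left\|e_n^*\right\|+\left\|e^*\right\|=2,$$
so $\left\|e_n^*\pm e^*\right\|=2$. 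Taking $x=\bar x$ (whose norm in $X$ equals its norm in $Y$), together with this subsequence $(e_n^*)$ and its $w^*$-limit $e^*$, yields exactly the asserted data. I stress that the statement does not require $e^*$ to be extreme, only to be a norm-one $w^*$-limit with the prescribed values and sum-norms.

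The step I expect to be the crux is securing simultaneously that the lifted functionals are extreme and that they converge to an admissible limit. The lifting lemma controls extremality one functional at a time but says nothing about the $w^*$-limit of the chosen extensions; the resolution is to lift \emph{first} and extract a convergent subsequence \emph{afterwards}, letting $e^*$ be whatever limit arises. The accompanying worry—that this limit might violate the norm or sum conditions—evaporates because those conditions are controlled from below by restriction to $Y$ and from above by the triangle inequality, which together pin each relevant norm to the value $2$ (respectively $1$).
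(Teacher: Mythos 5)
Your proof is correct and follows essentially the same route as the paper's: there too one extends the coordinate functionals of the embedded copy of $c$ to extreme points of $B_{X^*}$ and then extracts a $w^*$-convergent subsequence, and in fact the paper's auxiliary sets $F_n=\left\{x^*\in B_{X^*}: x^*(x_n)=-1,\ x^*(x_m)=x^*(x)=1 \textrm{ for } m\neq n\right\}$ coincide with your extension sets $E_n$. The only real difference is how extremality in $B_{X^*}$ is justified --- the paper notes that $F_n$ is a $w^*$-closed proper face of $B_{X^*}$, so $\textrm{Ext}\,F_n\subset \textrm{Ext}\,B_{X^*}$, while you use the standard midpoint/lifting argument from extremality of $y_n^*$ in $B_{Y^*}$ --- and both that step and your restriction-based computation of $\left\|e_n^*\pm e^*\right\|=2$ (the paper instead evaluates at $x_n$ and $x$) are valid.
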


\begin{proof}
Assume that $X$ contains an isometric copy of $c$. Consider
$$x=(1,1,\dots,1,\dots)\in c,$$
the sequence $(x_n)_{n\in\mathbb{N}}\subset B_c$ defined by
\begin{eqnarray*}
&&x_1=(-1,1,1,\dots,1,\dots),\\
&&x_2=(1,-1,1,1,\dots,1,\dots),\\
&&x_3=(1,1,-1,1,1,\dots,1,\dots),\\
&&\dots
\end{eqnarray*}
and the sequence $(x_n^*)_{n\in \mathbb{N}}\subset B_{c^*}$ defined
by
\begin{eqnarray*}
&&x_1^*=(0,1,0,0,\dots,0,\dots),\\
&&x_2^*=(0,0,1,0,0,\dots,0,\dots),\\
&&x_3^*=(0,0,0,1,0,0,\dots,0,\dots),\\
&& \dots \quad .
\end{eqnarray*}
Let $\widetilde{x_n^*}$ denote a norm preserving linear extension of
$x_n^*$ to the whole $X$.
Next, let us define the sets $F_n$, $n\in \mathbb{N}$, by
$$F_n=\left\{x^*\in B_{X^*}: x^*(x_n)=-1 \textrm{ and } x^*(x_m)=x^*(x)=1 \textrm{ for } m\neq n\right\}.$$
It is easy to see that
\begin{itemize}
  \item[(a)] $F_n\neq \emptyset$ for every $n\in\mathbb{N}$ (because $\widetilde{x_n^*}\in
  F_n$),
  \item[(b)] $F_n$ is a $w^*$-closed proper face of $B_{X^*}$, for
  every $n\in\mathbb{N}$,
  \item[(c)] $F_n \cap F_m = \emptyset$ provided $m\neq n$.
\end{itemize}
Hence, $ \textrm{Ext }F_n\neq \emptyset$ by the Krein-Milman
Theorem, $ \textrm{Ext }F_n\subset \textrm{Ext }B_{X^*}$ for every
$n\in\mathbb{N}$ and $\textrm{Ext }F_n \cap \textrm{Ext }F_m=
\emptyset$ whenever $m \neq n$.

Let $e_n^*\in \textrm{Ext }F_n\subset \textrm{Ext }B_{X^*}$. We can
assume that $(e_n^*)$ is $w^*$-convergent, let us say to $e^*$. Then
\begin{itemize}
  \item [(d)]$e_n^*(x)=e^*(x)=\left\|e^*\right\|=\left\|x\right\|=1$ for
  every $n\in\mathbb{N}$,
  \item [(e)] $e^*(x_i)=\lim\limits_{n}e_n^*(x_i)=1$ for every $i\in\mathbb{N}$.
\end{itemize}
Consequently, for every $n\in \mathbb{N}$, we have
$$2\geq \left\|e^*-e_n^*\right\|\geq e^*(x_n)-e_n^*(x_n)=1-(-1)=2$$
and
$$2\geq \left\|e^*+e_n^*\right\|\geq e^*(x)+e_n^*(x)=1+1=2.$$

\end{proof}
The previous theorem gives a corollary which allows us to show that the announced example does not contain $c$.

\begin{cor}\label{c in l_1 predual}
Let $X$ be a predual of $\ell_1$. If $X$ contains a subspace
isometric to $c$ then there exist $x\in B_X$ and a subsequence
$(e_{n_k}^*)_{k\in \mathbb{N}}$ of the standard basis
$(e_{n}^*)_{n\in \mathbb{N}}$ in $\ell_1$ such that
\begin{itemize}
\item[(1)] $e_{n_k}^* \overset{\sigma(\ell_1,X)}{\longrightarrow} e^*$ and $\textrm{supp }e_{n_k}^* \cap \textrm{supp } e^*= \emptyset$ for
  every $k\in\mathbb{N}$, where for $x^* \in \ell_1=X^*$ we put $\textrm{supp }x^*:=\left\{i\in \mathbb{N}:x^*(i)\neq
  0\right\}$,
  \item[(2)] $e_{n_k}^*(x)=e^*(x)=1$ for every
  $k\in\mathbb{N}$.
\end{itemize}
\end{cor}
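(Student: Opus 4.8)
The plan is to apply Theorem~\ref{c in a separable Banach space} and then read off the conclusion from the explicit structure of $X^*=\ell_1$. The theorem produces $x\in X$ with $\|x\|=1$ (so $x\in B_X$), a sequence of extreme points which I shall denote $(g_n^*)\subset\textrm{Ext }B_{X^*}$ to avoid a clash with the standard-basis notation of the corollary, together with its $w^*$-limit $e^*$, such that $g_n^*(x)=e^*(x)=\|e^*\|=\|x\|=1$ and $\|g_n^*\pm e^*\|=2$ for every $n$. Since $B_{X^*}=B_{\ell_1}$ and $\textrm{Ext }B_{\ell_1}=\{\pm e_j^*:j\in\mathbb{N}\}$, each $g_n^*$ is a signed basis vector, say $g_n^*=\varepsilon_n e_{j_n}^*$ with $\varepsilon_n\in\{-1,1\}$.

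The key computation is to convert the two norm conditions into a statement about supports. For $u,v\in\ell_1$ with $\|u\|=\|v\|=1$, the equality $\|u+v\|=2$ forces $u(i)v(i)\ge 0$ for all $i$, while $\|u-v\|=2$ forces $u(i)v(i)\le 0$ for all $i$; imposing both simultaneously gives $u(i)v(i)=0$ for every $i$, that is, $\textrm{supp }u\cap\textrm{supp }v=\emptyset$. Applying this with $u=g_n^*$ and $v=e^*$, the hypothesis $\|g_n^*\pm e^*\|=2$ yields $\textrm{supp }g_n^*\cap\textrm{supp }e^*=\emptyset$ for every $n$, which is exactly the disjointness required in~(1).

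It then remains to turn the abstract sequence $(g_n^*)$ of signed basis vectors into a genuine subsequence of $(e_n^*)$ carrying the correct sign, and this bookkeeping is the step I expect to be the main obstacle. By the pigeonhole principle I would first pass to a subsequence on which $\varepsilon_n$ is constant; should that sign be negative, I replace $x$ by $-x$ and each $g_n^*$ and $e^*$ by its negative, which preserves every hypothesis (including $g_n^*(x)=e^*(x)=1$ and the disjointness of supports) while arranging $g_n^*=e_{j_n}^*$ with positive sign. Next I check that no index repeats infinitely often: if $j_n=j$ for infinitely many $n$, the corresponding constant subsequence would have $w^*$-limit $e_j^*$, forcing $e^*=e_j^*$ and contradicting $\textrm{supp }g_n^*\cap\textrm{supp }e^*=\emptyset$. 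Hence infinitely many distinct indices occur, and I may choose $n_1<n_2<\cdots$ with $j_{n_1}<j_{n_2}<\cdots$; after relabelling, $(e_{j_{n_k}}^*)_k$ is a subsequence of the standard basis, $w^*$-convergent (equivalently, $\sigma(\ell_1,X)$-convergent) to $e^*$, disjoint in support from $e^*$, and satisfying $e_{j_{n_k}}^*(x)=e^*(x)=1$. This establishes~(1) and~(2) and completes the proof.
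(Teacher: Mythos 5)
Your proof is correct and follows exactly the route the paper intends: the corollary is stated there without proof as an immediate consequence of Theorem~\ref{c in a separable Banach space}, and your derivation supplies the standard missing details (extreme points of $B_{\ell_1}$ are $\pm e_j^*$; for unit vectors $u,v\in\ell_1$ the two equalities $\left\|u\pm v\right\|=2$ force termwise $u(i)v(i)=0$, hence disjoint supports). The sign normalization via replacing $x$ by $-x$ and the extraction of strictly increasing indices are both handled correctly, so nothing further is needed.
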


\begin{example}\label{example W_f}
Let
$$
W=\left\{x=(x(1),x(2),\dots)\in c:
\lim_{i}x(i)=\sum_{i=1}^{\infty}\frac{x(i)}{2^{i}}\right\}.
$$

The hyperplane $W$ has the following properties:
\begin{itemize}
  \item [(a)]   The map $\phi:\ell_{1}\rightarrow
  W^{*}$ defined by
\[
(\phi(y))(x)=\sum_{j=1}^{+\infty}x(j)y(j),
\]
where $y=(y(1),y(2),\dots)\in\ell_{1}$ and
$x=(x(1),x(2),\dots)\in W$ is an onto isometry. Moreover, if $\left(
e_{n}^*\right) $ denotes the standard basis of $\ell_{1}$, then
\[
e_{n}^*\overset{\sigma(\ell_{1},W)}{\longrightarrow}e^*=\left(\frac12,\frac{1}{2^2},\frac{1}{2^3},\dots\right)
\]
(see Theorem 4.3 in \cite{Casini-Miglierina-Piasecki2014}).

\item [(b)] From Corollary \ref{c in l_1 predual} we conclude that $W$ does not contain a subspace linearly isometric
to $c$.

\item [(c)] By Corollary 2 in \cite{Japon-Prus2004} the set
$$K=\left\{(y(1),y(2),\dots)\in \ell_1: \sum_{i=1}^{\infty}y(i)=1, y(i)\geq 0, i=1,2,\dots\right\}$$
is an infinite dimensional $\sigma(\ell_{1},W)$-closed proper
face of $B_{\ell_1}$.

\item [(d)] It is easy to see that $x=(1,1,\dots,1,\dots)\in \textrm{Ext
}B_{W}$. Consequently, as was observed in \cite{Semadeni1964},
$W$ is isometric to $A(K)$. Nevertheless, in our
special case this property can be shown directly.

\item[(e)] In order to prove that the space $W$ is polyhedral we need a characterization of polyhedrality given by Durier and Papini (Theorem 2 in \cite{Durier-Papini}): a Banach space $X$ is polyhedral if and only if the set
\[
C(x)=\{y\in X:\exists \lambda >0, \|x+\lambda(y-x)\|\leq 1\}
\]
is a closed set for every $x\in S_X$. Moreover, we remark that $x\in S_W$ if and only if there exists at least one index $i_0 \in \mathbb{N}$ such that $\left|x(i_0)\right|=1$. Then, an easy computation shows that 
\[
C(x)=\left\lbrace  y\in W: y(i)\leq 1 \text{ for } i\in I(x) \text{ and }  y(j)\geq -1 \text{ for } j\in J(x) \right\rbrace,
\]
where $I(x)=\{i:x(i)=1\}$ and $J(x)=\{j:x(j)=-1\}$. Therefore the set $C(x)$ is closed for every $x \in S_W$.

\end{itemize}
\end{example}

\begin{rem}
Example \ref{example W_f} shows that property (2) in Corollary \ref{c in l_1 predual} does not imply
that $c \subset X$. Also property (1) in the same corollary does not imply
that $c \subset X$. Indeed, to this end is sufficient to consider a different hyperplane of $c$:
$$V=\left\{x=(x(1),x(2),\dots)\in c:
\lim_{i}x(i)=\sum_{i=1}^{\infty}\frac{(-1)^{i+1}x(2i-1)}{2^{i}}\right\}.$$
By using Theorem 4.3 in \cite{Casini-Miglierina-Piasecki2014} we
have that $V^*=\ell_1$ and
$$e_{2n}^* \overset{\sigma(\ell_1,V)}{\longrightarrow} e^*=\left(\frac12,0,-\frac14,0,\frac18,0,-\frac{1}{16},\dots\right).$$
It is easy to see that does not exist $x\in V$ satisfying
the property (2) in Corollary \ref{c in l_1 predual}. Therefore
$V$ does not contain an isometric copy of $c$.
It would be desirable to understand if the simultaneous validity of conditions (1) and (2)  ensures the presence of a isometric copy of $c$ in a predual of $\ell_1$, but we have not been able to do this.
Nevertheless we show that the necessary condition expressed in Theorem \ref{c in a separable Banach space} is not
a sufficient condition in a general framework. Indeed, let us consider the space $X=\ell_1$ and the sequence $(x_n^*)_{n\in
\mathbb{N}}$ in $\ell_1^*=\ell_{\infty}$ defined by
\begin{eqnarray*}
&&x_1^*=(1,-1,1,1,\dots,1,\dots),\\
&&x_2^*=(1,1,-1,1,1,\dots,1,\dots),\\
&&x_3^*=(1,1,1,-1,1,1,\dots,1,\dots),\quad \dots.
\end{eqnarray*}
Then

\begin{itemize}
  \item[(a)] $x_n^* \overset{\sigma(\ell_{\infty},\ell_1)}{\longrightarrow} x^*=(1,1,\dots,1,\dots)$,
  \item[(b)] $\left\|x_n^* \pm x^*\right\|=2$ for
  every $n\in\mathbb{N}$,
  \item[(c)] $x_n^*, x^*\in \textrm{Ext }B_{\ell_{\infty}}$ for
  every $n\in\mathbb{N}$,
  \item[(d)] for $e_1=(1,0,0,\dots,0,\dots)\in \ell_1$ we have $x^*(e_1)=x_n^*(e_1)=1$
\end{itemize}
but $\ell_1$ does not contain $c$.
\end{rem}

\section{Final remarks}

Different authors refer to Zippin's statement. We focus on a paper by Lazar that gives a characterization of polyhedral Lindenstrauss spaces.

In \cite{Lazar1969}, the implication (1)$\Rightarrow$(3) in Theorem
3 is incorrect. Indeed, $W$ is a polyhedral space but $B_{W^*}$ contains an infinite dimensional $w^*$-closed
proper face (see items (e) and (c) in Example \ref{example W_f}). 
As a consequence of this remark we have that some of the implications stated in the theorem mentioned above reveal to be unproven. For instance, the implication (1)$\Rightarrow$(4) has no proof.

The result of Lazar has been subsequently used by Gleit and McGuigan in \cite{Gleit-McGuigan1972} to provide another characterization of polyhedral Lindenstrauss spaces. We remark that, in \cite{Gleit-McGuigan1972}, the implication
(3)$\Rightarrow$(1) in Theorem 1.2 is incorrect. Indeed, $W$ does
not contain an isometric copy of $c$ and for $x=(1,1,\dots,1,\dots)$
in $W$ and the $w^*$-limit
$e^*=\left(\frac12,\frac{1}{2^2},\frac{1}{2^3},\dots\right)$ of the
standard basis in $\ell_1=W^*$ we have (see items (b) and (a) in Example
\ref{example W_f})
$$e^*(x)=\sum_{i=1}^{\infty}\frac{1}{2^i}=1=\left\|x\right\|=\left\|e^*\right\|.$$
Also, the implication (3)$\Rightarrow$(1) in Corollary 2.7 is
incorrect because $W$ is a simplex space (see item (d) in Example
\ref{example W_f}).


\begin{thebibliography}{1}


\bibitem{Casini-Miglierina-Piasecki2014} E. Casini, E. Miglierina and
\L. Piasecki. Hyperplanes in the space of convergent sequences and
preduals of $\ell_1$. online first on Canad. Math. Bull., 
http://dx.doi.org/10.4153/CMB-2015-024-9, 4 Mar 2015.

\bibitem{Durier-Papini} R. Durier, P.L. Papini. Polyhedral norms in an infinite-dimensional space. \emph{Rocky Mountain J. Math.} \textbf{23}, (1993), 863-875.


\bibitem{Gleit-McGuigan1972} A. Gleit, R. McGuigan. A note on
polyhedral Banach spaces. \emph{Proc. Amer. Math. Soc.} \textbf{33}, (1972), 398-404.

\bibitem{Klee} V. Klee. Polyhedral sections of convex bodies, \emph{Acta Math.} \textbf{103}, (1960), 243-267.

%

\bibitem{Lazar1969} A. J. Lazar. Polyhedral Banach spaces and
extensions of compact operators. \textit{Israel J. Math.} \textbf{7}
(1969), 357-364.


\bibitem{Japon-Prus2004}M. A. Jap\'on-Pineda, S. Prus. Fixed point
property for general topologies in some Banach spaces. \emph{Bull.
Austral. Math. Soc.} \textbf{70} (2004), 229-244.


\bibitem{Semadeni1964} Z. Semadeni. Free compact sets. \textit{Bull. Acad.
Polon. Sci. S\'{e}r Sci. Math. Astronom. Phys.} \textbf{13} (1964),
141-146.


\bibitem{Zippin1969} M. Zippin. On some subspaces of Banach spaces
whose duals are $L_1$ spaces. \emph{Proc. Amer. Math. Soc.} \textbf{23},
(1969), 378-385.

\end{thebibliography}

\end{document}